\documentclass[runningheads]{llncs}


\usepackage{amssymb, amsmath, mathrsfs}
\usepackage{synttree, bussproofs,scrextend, stmaryrd, rotating, xcolor, algorithm2e, upgreek}
\usepackage{esvect,pgf, tikz, color}
\usetikzlibrary{arrows, automata}
\usepackage[all]{xy}
\usepackage{changepage,enumerate,proof,upgreek,relsize,trfsigns,
comment}



\newcommand{\D}{\mathrm{D}_{\charx}}
\newcommand{\T}{\mathrm{T}_{\charx}}
\newcommand{\four}{\mathrm{4}_{\charx}}
\newcommand{\B}{\mathrm{B}_{\charx}}
\newcommand{\five}{\mathrm{5}_{\charx}}
\newcommand{\ipa}{\mathrm{IPA}}
  
\newcommand{\canmod}{M^{C}}
\newcommand{\candom}{W^{C}}
\newcommand{\canirel}{\leq^{C}}
\newcommand{\canmrel}{R_{\charx}^{C}}
\newcommand{\canv}{V^{C}}

\newcommand{\ifandonlyif}{\textit{iff} }
\newcommand{\etc}{$\ldots$ }
\newcommand{\dfn}{Def.}
\newcommand{\fig}{Fig.}
\newcommand{\lem}{Lem.}
\newcommand{\thm}{Thm.}

\newcommand{\sect}{Sect.}
\newcommand{\cptr}{Ch.}






\newcommand{\h}{\mathsf{H}}
            


\newcommand{\ikm}{\mathsf{IK_{m}}}
\newcommand{\ikma}{\mathsf{IK_{m}}(\albet,\mathcal{A})}


\newcommand{\der}{\vdash_{\axs}^{\albet}}
\newcommand{\ent}{\Vdash_{\axs}^{\albet}}
\newcommand{\sat}{\Vdash^{\albet}}


\newcommand{\charx}{x}
\newcommand{\chary}{y}
\newcommand{\charz}{z}
\newcommand{\chara}{a}
\newcommand{\charb}{b}
\newcommand{\charc}{c}

\newcommand{\conv}[1]{\overline{#1}}


\newcommand{\iimp}{\supset}

\newcommand{\ieq}{\supset\subset}
\newcommand{\inot}{{\sim}}

\newcommand{\xbox}{[\charx]}
\newcommand{\xdia}{\langle \charx \rangle}
\newcommand{\xboxc}{[\conv{\charx}]}
\newcommand{\xdiac}{\langle \conv{\charx} \rangle}

\newcommand{\dia}{\Diamond}
\newcommand{\diap}[1]{\langle #1 \rangle}
\newcommand{\boxp}[1]{[ #1 ]}

\newcommand{\albet}{\Upsigma}
\newcommand{\albetf}{\albet^{+}}
\newcommand{\albetb}{\albet^{-}}
\newcommand{\lang}[1]{\mathcal{L}(#1)}

\newcommand{\prop}{\Upphi}

\newcommand{\axs}{\mathcal{A}}

\newcommand{\fseti}{\mathscr{A}}
\newcommand{\fsetii}{\mathscr{B}}
\newcommand{\fsetiii}{\mathscr{C}}
\newcommand{\fsetiv}{\mathscr{D}}




\newcommand{\nec}{(nec)}

\begin{document}

\title{A Framework for Intuitionistic Grammar Logics\thanks{Work supported by the European Research Council (ERC) Consolidator Grant 771779 (DeciGUT).}} 

\author{Tim S. Lyon\orcidID{0000-0003-3214-0828}}

\institute{Computational Logic Group, Institute of Artificial Intelligence, Technische Universit\"at Dresden, Germany  \\ \email{timothy\_stephen.lyon@tu-dresden.de}}


\maketitle

\begin{abstract}
We generalize intuitionistic tense logics to the multi-modal case by placing grammar logics on an intuitionistic footing. We provide axiomatizations for a class of base intuitionistic grammar logics as well as provide axiomatizations for extensions with combinations of seriality axioms and what we call \emph{intuitionistic path axioms}. We show that each axiomatization is sound and complete with completeness being shown via a typical canonical model construction.
\keywords{Bi-relational model \and Completeness \and Context-free \and Converse \and Grammar logic \and Intuitionistic logic \and Modal logic \and Path axiom}
\end{abstract}

\section{Introduction}\label{sec:introduction}



Having been introduced in 1988 by Fari\~nas del Cerro and Penttonen~\cite{CerPen88}, \emph{grammar logics} form a prominent class of normal, multi-modal logics that extend classical propositional logic with a set of modalities indexed by characters from an alphabet. Such logics obtain their name due to the incorporation of axioms which can be viewed as production rules in a context-free grammar, and which generate sequences of edges (which can be viewed as words) in a relational model. More significantly however, the class of grammar logics includes many well-known logics that have practical value in computer science; e.g. description logics~\cite{HorSat04}, epistemic logics~\cite{FagMosHalVar95}, information logics~\cite{Vak86}, temporal logics~\cite{CerHer95}, and standard modal logics (e.g. $\mathsf{K}$, $\mathsf{S4}$, and $\mathsf{S5}$)~\cite{DemNiv05}.


Another logical paradigm that is useful within computer science is that of \emph{constructive reasoning} (e.g.~\cite{How80,OsoNavArr04}), that is, reasoning where the claimed existence of an object implies its constructibility~\cite{Bro75}. One of the most renowned logics for formalizing constructive reasoning is \emph{intuitionistic logic}, which employs a version of implication that is stronger than its classical counterpart. Resting on the philosophical work of L.E.J. Brouwer, propositional intuitionistic logic was provided axiomatizations in the early 20\textsuperscript{th} century by Kolmogorov~\cite{Kol25}, Orlov~\cite{Orl28}, and Glivenko~\cite{Gli29}, with a first-order axiomatization given by Heyting~\cite{Hey30}. 


The interest in modal and intuitionistic logics naturally gave rise to combinations of the two, thus giving birth to the paradigm of \emph{intuitionistic modal logics}. A diverse set of intuitionistic modal logics have been proposed in the literature~\cite{BiePai00,BovDov84,Dov85,Fit48,PloSti86,Ser84,Sim94}, though the class of logics introduced by Plotkin and Stirling~\cite{PloSti86} 
 has become (most notably through the work of Simpson~\cite{Sim94}) one of the most popular 
 formulations. In the same year that Plotkin and Stirling~\cite{PloSti86} introduced their intuitionistic modal logics, Ewald introduced \emph{intuitionistic tense logic}~\cite{Ewa86}, which not only includes modalities that make reference to the future in a relational model ($\dia$ and $\Box$), but also includes modalities that make reference to the past ($\blacklozenge$ and $\blacksquare$). As with (multi-)modal and intuitionistic logics, intuitionistic modal logics have proven useful in computer science; e.g. such logics have been used to design verification techniques~\cite{FaiMen95}, in reasoning about functional programs~\cite{Pit91}, and in the definition of programming languages~\cite{DavPfe01}.


Due to the practical import of the aforementioned logics, it seems both natural and worthwhile to formulate intuitionistic versions of grammar logics. Hence, the main goal of this paper will be to axiomatize intuitionistic context-free grammar logics with converses, thus generalizing the work of~\cite{CerPen88,Ewa86,PloSti86}. 
 In the following section (\sect~\ref{sec:log-prelims}), we axiomatize and provide a semantics for intuitionistic grammar logics. Afterward (in \sect~\ref{sec:sound-complete}), we prove the soundness and completeness of our logics, with completeness being shown on the basis of a standard canonical model construction (adapting techniques provided in~\cite{Ewa86}). In the final section (\sect~\ref{sec:conclusion}), we briefly conclude and discuss future work.

\section{Intuitionistic Grammar Logics}\label{sec:log-prelims}


We define our languages for intuitionistic grammar logics relative to an \emph{alphabet} $\albet$ consisting of a non-empty countable set of characters, which will be used to index modalities. Following~\cite{DemNiv05}, we stipulate that each alphabet $\albet$ can be partitioned into a \emph{forward part} $\albetf := \{\chara, \charb, \charc, \ldots\}$ and a \emph{backward part} $\albetb := \{\conv{\chara}, \conv{\charb}, \conv{\charc}, \ldots\}$ such that each part has the same cardinality and the following is satisfied:
$$
\albet := \albetf \cup \albetb \text{ where } \albetf \cap \albetb = \emptyset \text{ and } \chara \in \albetf \text{ \ifandonlyif } \conv{\chara} \in \albetb
$$
We use $\chara$, $\charb$, $\charc$, \etc (possibly annotated) to denote the \emph{forward characters} contained in the forward part $\albetf$, and $\conv{\chara}$, $\conv{\charb}$, $\conv{\charc}$, \etc (possibly annotated) to denote the \emph{backward characters} contained in the backward part $\albetb$. Both forward and backward characters are referred to as \emph{characters} more generally, and we use $\charx$, $\chary$, $\charz$, \etc (possibly annotated) to denote them. Intuitively, modalities indexed with forward characters make reference to future states within a relational model, and modalities indexed with backward characters make reference to past states. The \emph{converse operation} $\conv{\cdot}$ is defined to be a function mapping each forward character $\chara \in \albetf$ to its \emph{converse} $\conv{\chara} \in \albetb$ and vice versa; hence, the converse operation is its own inverse, i.e. for any $\charx \in \albet$, $\charx = \conv{\conv{\charx}}$.

Each of our languages includes \emph{propositional atoms} from the denumerable set $\prop := \{p, q, r, \ldots\}$. Each language $\lang{\albet}$ is defined via the following grammar in BNF:
$$
A ::= p \ | \ \bot \ | \ A \lor A \ | \ A \land A \ | \ A \iimp A \ | \ \xdia A \ | \ \xbox A
$$
where $p$ ranges over the set of propositional atoms $\prop$ and $\charx$ ranges over the alphabet $\albet$. We use $A$, $B$, $C$, \etc to range over formulae in $\lang{\albet}$, define $\inot A := A \iimp \bot$, and define $A \ieq B := (A \iimp B)\land(B \iimp A)$. We interpret formulae on \emph{bi-relational $\albet$-models}, which are inspired by the models for intuitionistic modal and tense logics presented in~\cite{BovDov84,Dov85,Ewa86,PloSti86}: 

\begin{definition}[Bi-relational $\albet$-Model]\label{def:bi-relational-model} We define a \emph{bi-relational $\albet$-model} to be a tuple $M = (W, \leq, \{R_{\charx} \ | \ \charx \in \albet\}, V)$ such that:
\begin{itemize}

\item $W$ is a non-empty set of \emph{worlds} $\{w, u, v, \ldots\}$;

\item The \emph{intuitionistic relation} $\leq \ \subseteq W \times W$ is a preorder, i.e. it is reflexive and transitive;

\item The \emph{accessibility relation} $R_{\charx} \subseteq W \times W$ satisfies:

\begin{itemize}

\item[(F1)] For all $w, v, v' \in W$, if $w R_{\charx} v$ and $v \leq v'$, then there exists a $w' \in W$ such that $w \leq w'$ and $w' R_{\charx} v'$;

\item[(F2)] For all $w, w', v \in W$, if $w \leq w'$ and $w R_{\charx} v$, then there exists a $v' \in W$ such that $w' R_{\charx} v'$ and $v \leq v'$;

\item[(F3)] $w R_{\charx} u$ \ifandonlyif $u R_{\conv{\charx}} w$;

\end{itemize}


\item $V : W \to 2^{\prop}$ is a \emph{valuation function} satisfying the \emph{monotonicity condition}:  for each $w, u \in W$, if $w \leq u$, then $V(w) \subseteq V(u)$.

\end{itemize}

\end{definition}

The (F1) and (F2) conditions ensure the monotonicity of complex formulae (see \lem~\ref{lem:monotonicity}) in our models, which is a property characteristic of intuitionistic logics.\footnote{For a discussion of these conditions and their encompassing literature, see~\cite[\cptr~3]{Sim94}.} We note that we interpret accessibility relations indexed with forward characters as relating worlds to \emph{future} worlds, and accessibility relations indexed with backward characters as relating worlds to \emph{past} worlds. Such an interpretation shows that our models have a tense character, and additionally, shows that our logics generalize the intuitionistic tense logics of~\cite{Ewa86}.

We interpret formulae from $\lang{\albet}$ over bi-relational models via the following clauses.

\begin{definition}[Semantic Clauses]
\label{def:semantic-clauses} Let $M$ be a bi-relational $\albet$-model with $w \in W$. The \emph{satisfaction relation} $M,w \sat A$ between $w \in W$ of $M$ and a formula $A \in \lang{\albet}$ is inductively defined as follows:

\begin{itemize}

\item $M,w \sat p$ \ifandonlyif $p \in V(w)$, for $p \in \prop$;

\item $M,w \not\sat \bot$;

\item $M,w \sat A \lor B$ \ifandonlyif $M,w \sat A$ or $M,w \sat B$;

\item $M,w \sat A \land B$ \ifandonlyif $M,w \sat A$ and $M,w \sat B$;


\item $M,w \sat A \iimp B$ \ifandonlyif for all $w' \in W$, if $w \leq w'$ and $M,w' \sat A$, then $M,w' \sat B$;

\item $M,w \sat \xdia A$ \ifandonlyif there exists a $v \in W$ such that $w R_{\charx} v$ and $M,v \sat A$;

\item $M,w \sat \xbox A$ \ifandonlyif for all $w', v' \in W$, if $w \leq w'$ and $w' R_{\charx} v'$, then $M,v' \sat A$.

\end{itemize}
\end{definition}

\begin{lemma}\label{lem:monotonicity}
Let $M$ be a bi-relational $\albet$-model with $w,u \in W$ of $M$. If $w \leq u$ and $M, w \sat A$, then $M, u \sat A$.
\end{lemma}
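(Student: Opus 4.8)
The plan is to argue by induction on the structure of the formula $A$, following the inductive definition of the satisfaction relation in \dfn~\ref{def:semantic-clauses}. Throughout I fix the model $M$ and assume $w \leq u$, and in each case I assume $M, w \sat A$ and aim to derive $M, u \sat A$. For the base cases: if $A = p$ is a propositional atom, then $M, w \sat p$ gives $p \in V(w)$, and the monotonicity condition on $V$ together with $w \leq u$ yields $V(w) \subseteq V(u)$, so $p \in V(u)$ and hence $M, u \sat p$; if $A = \bot$, the claim holds vacuously since $M, w \not\sat \bot$ never holds.

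The conjunction and disjunction cases are immediate from the induction hypothesis applied to the immediate subformulae, as the clauses for $\land$ and $\lor$ commute with passing from $w$ to $u$. The cases $A = B \iimp C$ and $A = \xbox B$ are handled uniformly and, notably, do not require the induction hypothesis at all: both clauses are already \emph{upward closed}, in that they quantify universally over worlds $\geq$ the world of evaluation. Concretely, for $A = B \iimp C$, given any $u'$ with $u \leq u'$ and $M, u' \sat B$, transitivity of $\leq$ gives $w \leq u'$, and then $M, w \sat B \iimp C$ directly forces $M, u' \sat C$, so $M, u \sat B \iimp C$. The case $A = \xbox B$ is analogous: for any $u', v'$ with $u \leq u'$ and $u' R_{\charx} v'$, transitivity supplies $w \leq u'$, whence $M, w \sat \xbox B$ delivers $M, v' \sat B$.

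The main obstacle is the $\xdia$ case, which is the only place a nontrivial frame condition is needed. Suppose $M, w \sat \xdia B$, so there is a $v$ with $w R_{\charx} v$ and $M, v \sat B$. Here the existential witness $v$ is accessible from $w$ rather than from $u$, so one cannot conclude $M, u \sat \xdia B$ directly; this is exactly the mismatch that condition (F2) is designed to repair. From $w \leq u$ and $w R_{\charx} v$, condition (F2) furnishes a $v'$ with $u R_{\charx} v'$ and $v \leq v'$. Applying the induction hypothesis to $B$ along $v \leq v'$ upgrades $M, v \sat B$ to $M, v' \sat B$, and then $u R_{\charx} v'$ together with $M, v' \sat B$ witnesses $M, u \sat \xdia B$. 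I expect this step to be the crux of the argument; by contrast, conditions (F1) and (F3) play no role here, and only (F2) together with the transitivity of $\leq$ is invoked across the whole induction.
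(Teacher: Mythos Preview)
Your proof is correct and is exactly the induction on the complexity of $A$ that the paper gestures at in its one-line proof; you have simply spelled out each case, with (F2) and transitivity of $\leq$ used precisely where needed. One tiny slip of phrasing: in the $\bot$ case you wrote ``$M, w \not\sat \bot$ never holds,'' but you mean that $M, w \sat \bot$ never holds (equivalently, $M, w \not\sat \bot$ always holds), so the hypothesis is vacuous.
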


\begin{proof} By induction on the complexity of $A$.
\qed
\end{proof}

As will be shown in the subsequent section, given an alphabet $\albet$, the set of formulae valid with respect to the class of bi-relational $\albet$-models is axiomatizable. We refer to the axiomatization as $\h\ikm(\albet)$ (with $\h$ denoting the fact that the axiomatization is a \emph{Hilbert calculus}), and call the corresponding logic that it generates $\ikm(\albet)$. We note that $\ikm(\albet)$ is taken to be the base intuitionistic grammar logic relative to $\albet$; below, we will also consider extensions of $\ikm(\albet)$ by extending its axiomatization with common modal axioms.

\begin{definition}[Axiomatization] 
 We define our axiomatization $\h\ikm(\albet)$ below, where we have an axiom and inference rule for each $\charx \in \albet$.

\begin{multicols}{2}
\begin{itemize}

\item[A0] Any axiomatization for intuitionistic propositional logic

\item[A1] $\xbox (A \iimp B) \iimp (\xbox A \iimp \xbox B)$

\item[A2] $\xbox (A \land B) \ieq (\xbox A \land \xbox B)$

\item[A3] $\xdia (A \lor B) \ieq (\xdia A \lor \xdia B)$

\item[A4] $\xbox (A \iimp B) \iimp (\xdia A \iimp \xdia B)$

\item[A5] $\xbox A \land \xdia B \iimp \xdia (A \land B)$

\item[A6] $\inot \xdia \bot$ 

\item[A7] $(A \iimp \xbox \xdiac A) \land (\xdia \xboxc A \iimp A)$

\item[A8] $(\xdia A \iimp \xbox B) \iimp \xbox (A \iimp B)$

\item[A9] $\xdia (A \iimp B) \iimp (\xbox A \iimp \xdia B)$

\item[R1] \AxiomC{$A$}\RightLabel{$\nec$}\UnaryInfC{$\xbox A$}\DisplayProof

\end{itemize}
\end{multicols}
We define the logic $\ikm(\albet)$ to be the smallest set of formulae from $\lang{\albet}$ closed under substitutions of the axioms and applications of the inference rules. A formula $A$ is defined to be a \emph{theorem} of $\ikm(\albet)$  \ifandonlyif $A  \in \ikm(\albet)$.
\end{definition}

We also consider logics that are extensions of $\ikm(\albet)$ with sets $\axs$ of the following axioms. 
$$
\D: \xbox A \iimp \xdia A \quad \ipa: (\diap{\charx_{1}} \cdots \diap{\charx_{n}} A \iimp \diap{\charx} A) \land (\boxp{\charx} A \iimp \boxp{\charx_{1}} \cdots \boxp{\charx_{n}} A)
$$
We refer to axioms of the form shown above left as \emph{seriality axioms}, and axioms of the form shown above right as \emph{intuitionistic path axioms} ($\ipa$s). We use $\axs$ to denote any arbitrary collection of axioms of the above forms. Moreover, we note that the collection of $\ipa$s includes multi-modal variants of standard axioms such as $\T$, $\B$, $\four$, and $\five$, which are shown below.
$$
\T: (A \iimp \xdia A) \land (\xbox A \iimp A) \quad
\four: (\xdia \xdia A \iimp \xdia A) \land (\xbox A \iimp \xbox \xbox A)
$$
$$
\B: (\xdiac A \iimp \xdia A) \land (\xbox A \iimp \xboxc A)
\quad
\five: (\xdiac \xdia A \iimp \xdia A) \land (\xbox A \iimp \xboxc \xbox A)
$$
In the next section, we show that any extension of $\h\ikm(\albet)$ with a set $\axs$ of axioms is sound and complete relative to a specified sub-class of the bi-relational $\albet$-models. For each axiom we extend $\h\ikm(\albet)$ with, we impose a frame condition on our class of bi-relational $\albet$-models. Axioms and related frame conditions are displayed in \fig~\ref{fig:axioms-related-conditions}, and extensions of $\h\ikm(\albet)$ with seriality and $\ipa$ axioms, along with their corresponding models, are defined below.

\begin{figure}[t]
\begin{center}
\bgroup
\def\arraystretch{1.5}
\begin{tabular}{| c | c | c |}
\hline
Axiom & $\xbox A \iimp \xdia A$  & $(\diap{\charx_{1}} \cdots \diap{\charx_{n}} A \iimp \diap{\charx} A) \land (\boxp{\charx} A \iimp \boxp{\charx_{1}} \cdots \boxp{\charx_{n}} A)$ \\
\hline
Condition & $\forall w \exists u (w R_{\charx} u)$ & $\forall w_{0}, \ldots, w_{n} (w_{0} R_{\charx_{1}} w_{1} \land \cdots \land w_{n-1} R_{\charx_{n}} w_{n} \iimp w_{0} R_{\charx} w_{n})$\\
\hline
\end{tabular}
\egroup
\end{center}

\caption{Axioms and their related frame conditions. We note that when $n=0$, the related frame condition is taken to be $w R_{\charx} w$.}
\label{fig:axioms-related-conditions}
\end{figure}

\begin{definition}[Terminology for Extensions] We define the axiomatization $\h\ikm(\albet,\axs)$ to be $\h\ikm(\albet) \cup \axs$, and define the logic $\ikm(\albet,\axs)$ to be the smallest set of formulae from $\lang{\albet}$ closed under substitutions of the axioms and applications of the inference rules. A formula $A$ is defined to be an $\ikm(\albet,\axs)$-theorem, written $\der A$, \ifandonlyif $A \in \ikm(\albet,\axs)$, and a formula $A$ is said to be \emph{derivable} from a set of formulae $\fseti \subseteq \lang{\albet}$, written $\fseti \der A$, \ifandonlyif for some $B_{1}, \ldots, B_{n} \in \fseti$, $\der B_{1} \land \cdots \land B_{n} \iimp A$.

Moreover, we define a \emph{bi-relational $(\albet,\axs)$-model} to be a bi-relational $\albet$-model satisfying each frame condition related to an axiom $A \in \axs$. 
 A formula $A$ is defined to be \emph{globally true} on a bi-relational $(\albet,\axs)$-model $M$, written $M \ent A$, \ifandonlyif $M,u \sat A$ for all worlds $u \in W$ of $M$. A formula $A$ is defined to be \emph{$(\albet,\axs)$-valid}, written $\ent A$, \ifandonlyif $A$ is globally true on every bi-relational $(\albet,\axs)$-model. Last, we say that a set $\fseti$ of formulae \emph{semantically implies} a formula $A$, written $\fseti \ent A$, \ifandonlyif for all bi-relational $(\albet,\axs)$-models $M$ and each $w \in W$ of $M$, if $M, w \sat B$ for each $B \in \fseti$, then $M,w \sat A$.
\end{definition}

\begin{remark} Note that the axiomatization $\h\ikm(\albet) = \h\ikm(\albet,\emptyset)$ and that a bi-relational $(\albet,\emptyset)$-model is a bi-relational $\albet$-model.
\end{remark}

Let us now move on to the next section and prove the soundness and completeness results for our logics.



\section{Soundness and Completeness}\label{sec:sound-complete}

In this section, 
 we show that the $\der$ and $\ent$ relations coincide, that is to say, we show that each intuitionistic grammar logic $\ikma$ is sound and complete. As usual, soundness is straightforward to prove:

\begin{theorem}[Soundness]\label{thm:sound-ikma}
If $\fseti \der A$, then $\fseti \ent A$.
\end{theorem}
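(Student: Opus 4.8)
The plan is to prove the stronger, premise-free statement that every theorem is valid, namely that $\der A$ implies $\ent A$, by induction on the length of a derivation of $A$ in $\h\ikm(\albet,\axs)$, and then to recover the general claim using the reflexivity of $\leq$. Since $\ent A$ abbreviates global truth on \emph{every} bi-relational $(\albet,\axs)$-model, it suffices to fix an arbitrary such model $M$ and show that the set of formulae globally true on $M$ contains every axiom and is closed under the inference rules; as $M$ was arbitrary, this yields validity of every theorem.

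For the base case I would verify that each axiom scheme is globally true on every bi-relational $(\albet,\axs)$-model. The A0 axioms are handled by the standard intuitionistic Kripke semantics determined by the preorder $\leq$ together with the monotonicity condition on $V$, invoking \lem~\ref{lem:monotonicity} wherever persistence of satisfaction along $\leq$ is needed; axiom A6 is immediate since $M,v \not\sat \bot$, so $\xdia \bot$ holds nowhere; and the distribution and easy interaction axioms A1--A5 unfold directly from the semantic clauses using only reflexivity and transitivity of $\leq$ (and \lem~\ref{lem:monotonicity}). The genuinely delicate cases are A7, A8, and A9, where one must thread a frame condition between a $\leq$-step and an $R_{\charx}$-step. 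For instance, to verify A8 one fixes $w$ with $M,w \sat \xdia A \iimp \xbox B$, takes $w \leq w'$, $w' R_{\charx} v'$, and $v' \leq v''$ with $M,v'' \sat A$, applies (F1) to obtain $w''$ with $w' \leq w''$ and $w'' R_{\charx} v''$, infers $M,w'' \sat \xdia A$ and hence $M,w'' \sat \xbox B$, and then reads off $M,v'' \sat B$ by reflexivity of $\leq$. Axiom A9 is symmetric but invokes (F2), and A7 uses the converse condition (F3) to pass between $R_{\charx}$ and $R_{\conv{\charx}}$ (again appealing to \lem~\ref{lem:monotonicity}).

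For the axioms of $\axs$ I would appeal to the frame conditions recorded in \fig~\ref{fig:axioms-related-conditions}. The seriality axiom $\D$ is immediate from $\forall w \exists u\,(w R_{\charx} u)$. For an $\ipa$ axiom the diamond half $\diap{\charx_{1}}\cdots\diap{\charx_{n}} A \iimp \diap{\charx} A$ is direct: a satisfying $R_{\charx_{1}}$-$\cdots$-$R_{\charx_{n}}$ chain is collapsed to a single $R_{\charx}$-edge by the composition condition. The box half $\boxp{\charx} A \iimp \boxp{\charx_{1}}\cdots\boxp{\charx_{n}} A$ is subtler, since unwinding the nested boxes yields a \emph{mixed} chain of alternating $\leq$- and $R_{\charx_{i}}$-steps rather than a pure relational chain; here I would repeatedly apply (F1) to absorb the intervening $\leq$-steps into the source of each relational step, producing a genuine $R_{\charx_{1}}$-$\cdots$-$R_{\charx_{n}}$ chain to which the composition condition then applies (the $n=0$ convention $w R_{\charx} w$ being the reflexive case).

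The inductive step checks that the rules preserve global truth. Modus ponens (part of A0) uses reflexivity of $\leq$: if $A$ and $A \iimp B$ are globally true on $M$, then instantiating the clause for $\iimp$ at $w' = w$ gives $M,w \sat B$ at every $w$. Necessitation $\nec$ uses the clause for $\xbox$ directly: if $A$ is globally true then $M,v' \sat A$ for every $v'$, whence $M,w \sat \xbox A$ for every $w$. Finally, to obtain $\fseti \der A \Rightarrow \fseti \ent A$, I unpack the definitions: $\fseti \der A$ yields $B_{1},\ldots,B_{n} \in \fseti$ with $\der B_{1} \land \cdots \land B_{n} \iimp A$, hence $\ent B_{1} \land \cdots \land B_{n} \iimp A$ by the above; then for any $M$ and $w$ satisfying every member of $\fseti$ we have $M,w \sat B_{1} \land \cdots \land B_{n}$, and reflexivity of $\leq$ discharges the implication at $w$ to give $M,w \sat A$. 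I expect the interaction axioms A7--A9 and the box half of the path axioms — the only cases that genuinely exercise (F1)--(F3) — to be the single real obstacle, everything else being routine unfolding of the semantic clauses.
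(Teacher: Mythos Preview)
Your proposal is correct and follows exactly the same strategy as the paper: first establish that $\der A$ implies $\ent A$ by checking that every axiom is valid and every rule preserves validity, then unpack the definition of $\fseti \der A$ to obtain witnesses $B_{1},\ldots,B_{n}\in\fseti$ and conclude $\fseti \ent A$. The paper's own proof is in fact considerably terser than yours---it simply asserts that the axiom-and-rule check goes through---so your detailed treatment of A7--A9 via (F1)--(F3) and of the box half of $\ipa$ via repeated application of (F1) is a faithful elaboration rather than a departure.
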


\begin{proof} One can prove that if $\der A$, then $\ent A$ by showing that each axiom is valid and each inference rule preserves validity. Then, if we assume that $\fseti \der A$, it follows that for some $B_{1}, \ldots, B_{n} \in \fseti$, $\der B_{1} \land \cdots \land B_{n} \iimp A$, which further implies that $\ent B_{1} \land \cdots \land B_{n} \iimp A$. The last fact permits us to conclude that $\fseti \ent A$.
\qed
\end{proof}

To establish completeness we combine techniques used for establishing the completeness of intuitionistic logic~\cite{Dal04} and intuitionisitc tense logic~\cite{Ewa86}. Our strategy is rather standard and consists of constructing a canonical model where worlds are pairs of the form $(\fseti^{\omega},\fsetii^{\omega})$ with $\fseti^{\omega}$ and $\fsetii^{\omega}$ sets of formulae. If one assumes that $\fseti \not\der A$, then one can show that a pair exists in the canonical model satisfying $\fseti$, but not $A$, thus establishing completeness (see \thm~\ref{thm:complete-ikma} below). We begin by defining two useful notions, viz. the notion of an $\ikma$-consistent set and the notion of an $\ikma$-saturated pair.

\begin{definition}[$\ikma$-Consistent]\label{def:consistent}
We define a pair of sets of formulae $(\fseti,\fsetii)$ to be $\ikma$-\emph{consistent} \ifandonlyif for no finite subsets $\fseti_{0} \subseteq \fseti$ and $\fsetii_{0} \subseteq \fsetii$ we have $\der \bigwedge \fseti_{0} \iimp \bigvee \fsetii_{0}$. 
\end{definition}

\begin{definition}[$\ikma$-Saturated]\label{def:saturated} We define a pair $(\fseti,\fsetii)$ to be $\ikma$-\emph{saturated} \ifandonlyif 
\begin{enumerate}

\item $(\fseti,\fsetii)$ is $\ikma$-consistent;

\item if $\fseti \der A$, then $A \in \fseti$;

\item if $\fseti \der A \lor B$, then $A \in \fseti$ or $B \in \fseti$;

\item $\fseti \cap \fsetii = \emptyset$;

\item $\fseti \cup \fsetii = \lang{\albet}$.

\end{enumerate}
\end{definition}

\begin{lemma}\label{lem:lindenbaum}
Suppose that $(\fseti,\fsetii)$ is $\ikma$-consistent. Then, there exists a saturated pair $(\fseti^{\omega},\fsetii^{\omega})$ such that $\fseti \subseteq \fseti^{\omega}$ and $\fsetii \subseteq \fsetii^{\omega}$.
\end{lemma}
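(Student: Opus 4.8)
The plan is to run a Lindenbaum-style construction. Since the alphabet $\albet$ and the set of atoms $\prop$ are countable, the language $\lang{\albet}$ is countable, so I would fix an enumeration $A_{0}, A_{1}, A_{2}, \ldots$ of all its formulae and build an increasing chain of $\ikma$-consistent pairs $(\fseti_{0},\fsetii_{0}) \subseteq (\fseti_{1},\fsetii_{1}) \subseteq \cdots$ starting from $(\fseti_{0},\fsetii_{0}) := (\fseti,\fsetii)$. At stage $n+1$ I decide where to place $A_{n}$: if $(\fseti_{n} \cup \{A_{n}\}, \fsetii_{n})$ is still $\ikma$-consistent, put $A_{n}$ on the left, i.e. set $(\fseti_{n+1},\fsetii_{n+1}) := (\fseti_{n} \cup \{A_{n}\}, \fsetii_{n})$; otherwise put it on the right, $(\fseti_{n+1},\fsetii_{n+1}) := (\fseti_{n}, \fsetii_{n} \cup \{A_{n}\})$. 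Finally I take the unions $\fseti^{\omega} := \bigcup_{n} \fseti_{n}$ and $\fsetii^{\omega} := \bigcup_{n} \fsetii_{n}$. By construction $\fseti \subseteq \fseti^{\omega}$ and $\fsetii \subseteq \fsetii^{\omega}$, and every formula lands in exactly one side, so conditions 4 and 5 of $\ikma$-saturation are immediate.

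The key step, and the main obstacle, is the splitting lemma guaranteeing the construction never gets stuck: if $(\fseti_{n},\fsetii_{n})$ is $\ikma$-consistent, then at least one of $(\fseti_{n} \cup \{A_{n}\}, \fsetii_{n})$ and $(\fseti_{n}, \fsetii_{n} \cup \{A_{n}\})$ is $\ikma$-consistent. I would argue by contraposition: if both are inconsistent, there are finite $\fseti_{0}, \fseti_{1} \subseteq \fseti_{n}$ and $\fsetii_{0}, \fsetii_{1} \subseteq \fsetii_{n}$ with $\der \bigwedge \fseti_{0} \land A_{n} \iimp \bigvee \fsetii_{0}$ and $\der \bigwedge \fseti_{1} \iimp \bigvee \fsetii_{1} \lor A_{n}$. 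Writing $\alpha := \bigwedge(\fseti_{0} \cup \fseti_{1})$ and $\beta := \bigvee(\fsetii_{0} \cup \fsetii_{1})$, these give $\der \alpha \land A_{n} \iimp \beta$ and $\der \alpha \iimp \beta \lor A_{n}$, whence a purely intuitionistic propositional argument (case analysis on the disjunction $\beta \lor A_{n}$) yields $\der \alpha \iimp \beta$, i.e. $\der \bigwedge(\fseti_{0} \cup \fseti_{1}) \iimp \bigvee(\fsetii_{0} \cup \fsetii_{1})$, contradicting the $\ikma$-consistency of $(\fseti_{n},\fsetii_{n})$. This intuitionistically valid case split is exactly where care is needed, since a classical argument would be illegitimate here.

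It then remains to verify conditions 1--3. For consistency (1), any finite subsets of $\fseti^{\omega}$ and $\fsetii^{\omega}$ already appear together in some $(\fseti_{n},\fsetii_{n})$ by monotonicity of the chain, so a witness to inconsistency of the limit would witness inconsistency of some finite stage, which is impossible. For left-closure (2), if $\fseti^{\omega} \der A$ with $A = A_{n}$ but $A_{n} \notin \fseti^{\omega}$, then by condition 5 we have $A_{n} \in \fsetii^{\omega}$; taking a finite $\fseti_{0} \subseteq \fseti^{\omega}$ realizing $\der \bigwedge \fseti_{0} \iimp A_{n}$ together with $\{A_{n}\} \subseteq \fsetii^{\omega}$ contradicts consistency. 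For the disjunction property (3), if $\fseti^{\omega} \der A \lor B$ but neither $A$ nor $B$ lies in $\fseti^{\omega}$, then both lie in $\fsetii^{\omega}$ by condition 5, and a finite $\fseti_{0} \subseteq \fseti^{\omega}$ with $\der \bigwedge \fseti_{0} \iimp A \lor B$ together with $\{A,B\} \subseteq \fsetii^{\omega}$ again contradicts consistency. These verifications are routine once the splitting lemma is in hand; the only genuinely delicate point is the intuitionistic propositional reasoning underpinning that lemma.
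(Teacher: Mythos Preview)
Your proof is correct, and the intuitionistic splitting argument you isolate is exactly the point that needs care. However, the route differs from the paper's. The paper does not enumerate all formulae and run the left/right dichotomy; instead it enumerates only \emph{disjunctions} $B_{0,i}\lor B_{1,i}$, each infinitely often, and at stage $n$ tries to add one of the two disjuncts to the left, failing which it throws both disjuncts to the right. Saturation property~3 (the disjunction property) is thus baked directly into the construction rather than derived afterwards from consistency plus totality, and property~2 is obtained from~3 via the trivial observation that $A$ is equivalent to $A\lor A$. Your approach is the more standard Lindenbaum construction: a single pass over all formulae, with the splitting lemma doing the real work and properties~2 and~3 recovered a posteriori from conditions~1 and~5. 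What the paper's version buys is that no separate splitting lemma needs to be stated (the ``otherwise'' branch simply dumps both disjuncts on the right, and consistency of that step is essentially the same intuitionistic case analysis you carry out, but left implicit); what your version buys is a cleaner, more uniform construction and an explicit, self-contained justification of every step. Both are entirely adequate here.
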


\begin{proof} Let us enumerate all disjunctions from $\lang{\albet}$ where each disjunction occurs infinitely often: $\langle B_{0,i} \lor B_{1,i} \rangle_{i \in \mathbb{N}}$. We set $(\fseti_{0},\fsetii_{0}) := (\fseti,\fsetii)$ and define an infinite sequence of pairs as follows: 
$\fseti_{n+1} := \fseti_{n} \cup \{B_{j,n}\}$ and $\fsetii_{n+1} := \fsetii_{n}$, if $(\fseti_{n} \cup \{B_{j,n}\},\fsetii_{n})$ is $\ikma$-consistent (and if $(\fseti_{n} \cup \{B_{j,n}\},\fsetii_{n})$ is $\ikma$-consistent for both $j = 0$ and $j = 1$, then we set $\fseti_{n+1} := \fseti_{n} \cup \{B_{0,n}\}$), and $\fseti_{n+1} := \fseti_{n}$ and $\fsetii_{n+1} := \fsetii_{n} \cup \{B_{0,i},B_{1,i}\}$ otherwise.

Let $\fseti^{\omega} := \bigcup_{i \in \mathbb{N}} \fseti_{i}$ and $\fsetii^{\omega} := \bigcup_{i \in \mathbb{N}} \fsetii_{i}$. We now argue that $(\fseti^{\omega},\fsetii^{\omega})$ is saturated. It is straightforward to show that for each $n$, $(\fseti_{n},\fsetii_{n})$ is $\ikma$-consistent and that $\fseti_{n} \cap \fsetii_{n} = \emptyset$ from which the saturation properties 1 and 4 can be deduced (see \dfn~\ref{def:saturated} above). We note that saturation properties 3 and 5 follow from the above construction procedure, and 2 follows from 3 since if $\fseti \der A$, then $\fseti \der A \lor A$ (cf.~\cite[\lem~5.3.8]{Dal04}).
\qed
\end{proof}

\begin{definition}[Canonical Model]\label{def:canonical-model} 
 We define the \emph{canonical model} $\canmod(\albet,\axs) := (\candom, \canirel, \{\canmrel \ | \ \charx \in \albet\}, \canv)$ as shown below, and let $w, u \in \candom$ with $w := (\fseti,\fsetii)$ and $u := (\fseti',\fsetii')$.
\begin{itemize}

\item $\candom := \{(\fsetiii,\fsetiv) \ | \ (\fsetiii,\fsetiv) \text{ is saturated.}\}$;

\item $w \canirel u$ \ifandonlyif $\fseti \subseteq \fseti'$; 

\item $w \canmrel u$ \ifandonlyif (i) for all $A \in \lang{\albet}$, if $\xbox A \in \fseti$, then $A \in \fseti'$, and (ii) for all $A \in \lang{\albet}$, if $A \in \fseti'$, then $\xdia A \in \fseti$;

\item $w \in \canv(p)$ \ifandonlyif $p \in \fseti$.

\end{itemize}
\end{definition}

The following two lemmas are proven in an almost identical fashion to \lem~3 and 5 of~\cite{Ewa86}.

\begin{lemma}\label{lem:existence-lemma}
Let $w := (\fseti_{0},\fsetii_{0}) \in \candom$. Then, $\xdia A \in \fseti_{0}$ \ifandonlyif there exists a $u := (\fseti_{1},\fsetii_{1}) \in \candom$ such that $w \canmrel u$ and $A \in \fseti_{1}$.
\end{lemma}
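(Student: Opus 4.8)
The plan is to prove this existence lemma (the modal analogue of the standard Lindenbaum/existence construction) by splitting into the two directions of the biconditional. The right-to-left direction is the easy one: assuming $w \canmrel u$ and $A \in \fseti_1$, clause (ii) of the definition of $\canmrel$ (Def.~\ref{def:canonical-model}) immediately yields $\xdia A \in \fseti_0$, so this requires no work beyond unfolding the definition.

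For the harder left-to-right direction, suppose $\xdia A \in \fseti_0$. The goal is to construct a saturated pair $u := (\fseti_1,\fsetii_1) \in \candom$ witnessing both the relational condition $w \canmrel u$ and the membership $A \in \fseti_1$. The natural candidate is to build $u$ around the ``unboxed'' content of $w$ together with $A$. Concretely, I would first define the seed sets
$$
\fseti := \{A\} \cup \{B \mid \xbox B \in \fseti_0\}, \qquad \fsetii := \{C \mid \xdia C \notin \fseti_0\},
$$
and then apply the Lindenbaum-style saturation procedure (Lem.~\ref{lem:lindenbaum}) to obtain a saturated extension $(\fseti_1,\fsetii_1)$ with $\fseti \subseteq \fseti_1$ and $\fsetii \subseteq \fsetii_1$. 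Since $A \in \fseti \subseteq \fseti_1$, the membership claim is immediate. To invoke Lem.~\ref{lem:lindenbaum} I must first verify that $(\fseti,\fsetii)$ is $\ikma$-consistent; this is the step where the hypothesis $\xdia A \in \fseti_0$ is used essentially, and it is the main obstacle.

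For that consistency check I would argue by contradiction: if $(\fseti,\fsetii)$ were inconsistent, then for some finite $\fseti_0 \subseteq \fseti$ and finite $\fsetii_0 \subseteq \fsetii$ we have $\der \bigwedge \fseti_0 \iimp \bigvee \fsetii_0$. After separating out $A$ and rewriting $\bigwedge \fseti_0$ as $A \land \bigwedge\{B \mid \xbox B \in \fseti_0\}$ (finitely many $B$), I would push the derivability through the $\xdia$ modality. The key modal principles here are the $\nec$ rule together with axioms A1, A4, A5, and the distribution axiom A3: from a propositional entailment one derives under $\xbox$ on the antecedents, uses A5 ($\xbox A \land \xdia B \iimp \xdia(A \land B)$) to merge the boxed conjunction with $\xdia A$, and uses A4/A3 to push $\xdia$ across the implication and distribute over the disjunction $\bigvee \fsetii_0$. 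The upshot should be $\der \bigwedge\{\xbox B \mid \xbox B \in \fseti_0\} \land \xdia A \iimp \bigvee\{\xdia C \mid C \in \fsetii_0\}$. Since each such $\xbox B \in \fseti_0$ and $\xdia A \in \fseti_0$, saturation of $(\fseti_0,\fsetii_0)$ (via properties 2 and 3 of Def.~\ref{def:saturated}) forces some $\xdia C \in \fseti_0$ with $C \in \fsetii_0$; but $C \in \fsetii$ was chosen precisely so that $\xdia C \notin \fseti_0$, a contradiction. This establishes consistency, and it remains only to confirm $w \canmrel u$: condition (i) holds because $\xbox B \in \fseti_0$ puts $B \in \fseti \subseteq \fseti_1$, and condition (ii) holds by the contrapositive of the choice of $\fsetii$, namely if $\xdia C \notin \fseti_0$ then $C \in \fsetii \subseteq \fsetii_1$, so $C \notin \fseti_1$ by saturation property~4. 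I expect the bookkeeping in assembling the correct modal derivation (getting A5, A4, and A3 to cooperate across a finite conjunction and disjunction) to be the only genuinely delicate part.
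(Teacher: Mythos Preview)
Your argument is correct and follows the standard route that the paper defers to (the paper gives no proof of its own, citing Ewald's Lemma~3 in~\cite{Ewa86}, whose construction is exactly the seed-then-saturate scheme you describe). Two small points worth tidying: you have a notational clash, using $\fseti_{0},\fsetii_{0}$ both for the components of $w$ and for the finite subsets witnessing inconsistency; and in the modal bookkeeping you will also need axiom~A2 to collapse $\bigwedge_i \xbox B_i$ into $\xbox(\bigwedge_i B_i)$ before applying~A5, together with axiom~A6 to handle the degenerate case where the finite subset of $\fsetii$ is empty (so that $\bigvee\fsetii_0 = \bot$).
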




\begin{lemma}\label{lem:necessity-lemma}
Let $w := (\fseti_{0},\fsetii_{0}) \in \candom$. Then, $\xbox A \in \fseti_{0}$ \ifandonlyif for each $u := (\fseti_{1},\fsetii_{1})$ and $v := (\fseti_{2},\fsetii_{2})$ in $\candom$, if $w \canirel u$ and $u \canmrel v$, then $A \in \fseti_{2}$.
\end{lemma}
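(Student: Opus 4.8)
The plan is to prove the two directions of the biconditional separately, with the forward direction being routine and the backward direction requiring a two-step saturation construction. For the forward direction, I would assume $\xbox A \in \fseti_0$ and take any $u = (\fseti_1,\fsetii_1)$ and $v = (\fseti_2,\fsetii_2)$ in $\candom$ with $w \canirel u$ and $u \canmrel v$. Since $w \canirel u$ means $\fseti_0 \subseteq \fseti_1$, we get $\xbox A \in \fseti_1$, and clause~(i) of $u \canmrel v$ (see \dfn~\ref{def:canonical-model}) immediately yields $A \in \fseti_2$.

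The backward direction I would argue contrapositively: assuming $\xbox A \notin \fseti_0$, I construct $u$ and $v$ with $w \canirel u$, $u \canmrel v$, and $A \notin \fseti_2$. First I build $v$. The pair $(\{B \mid \xbox B \in \fseti_0\}, \{A\})$ is $\ikma$-consistent, since $\der \bigwedge_i B_i \iimp A$ with each $\xbox B_i \in \fseti_0$ would give $\xbox A \in \fseti_0$ via $\nec$, A1, and A2, contradicting the assumption. By \lem~\ref{lem:lindenbaum} I extend this pair to a saturated $v = (\fseti_2,\fsetii_2)$ satisfying $\{B \mid \xbox B \in \fseti_0\} \subseteq \fseti_2$ and $A \in \fsetii_2$ (so $A \notin \fseti_2$). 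Next I build $u$ so that $u \canmrel v$. The successor I need must contain $\xdia B$ for every $B \in \fseti_2$ (to secure clause~(ii)) and must exclude $\xbox C$ for every $C \in \fsetii_2$ (to secure clause~(i)), while extending $\fseti_0$. Accordingly, I would show that the pair $(\fseti_0 \cup \{\xdia B \mid B \in \fseti_2\}, \{\xbox C \mid C \in \fsetii_2\})$ is $\ikma$-consistent and saturate it via \lem~\ref{lem:lindenbaum} to obtain $u = (\fseti_1,\fsetii_1)$. Then $\fseti_0 \subseteq \fseti_1$ gives $w \canirel u$; $\xdia B \in \fseti_1$ for all $B \in \fseti_2$ gives clause~(ii); and since $\xbox C \in \fsetii_1$ for all $C \in \fsetii_2$, disjointness of $\fseti_1$ and $\fsetii_1$ (saturation property~4) forces clause~(i). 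Together with $A \notin \fseti_2$, this completes the contrapositive.

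The hard part will be the consistency of this second pair, and I expect it to be the crux of the whole argument. Suppose it fails; then $\der \big(F \land \bigwedge_i \xdia B_i\big) \iimp \bigvee_j \xbox C_j$ for some conjunction $F$ of finitely many elements of $\fseti_0$, with $B_i \in \fseti_2$ and $C_j \in \fsetii_2$. Since saturated left-sets are closed under $\land$ and saturated right-sets under $\lor$, I can replace $\bigwedge_i B_i$ by a single $B^{\ast} \in \fseti_2$ and $\bigvee_j C_j$ by a single $C^{\ast} \in \fsetii_2$, using $\nec$ together with A1 and A4 for the monotonicity of $\xbox$ and $\xdia$; this reduces the derivation to $\der F \iimp (\xdia B^{\ast} \iimp \xbox C^{\ast})$. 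Now A8 is exactly the tool to collapse the two modalities: it gives $\der (\xdia B^{\ast} \iimp \xbox C^{\ast}) \iimp \xbox(B^{\ast} \iimp C^{\ast})$, whence $\fseti_0 \der \xbox(B^{\ast} \iimp C^{\ast})$ and so $\xbox(B^{\ast} \iimp C^{\ast}) \in \fseti_0$ by saturation. By the defining inclusion $\{B \mid \xbox B \in \fseti_0\} \subseteq \fseti_2$ of $v$, this puts $B^{\ast} \iimp C^{\ast} \in \fseti_2$, and since $B^{\ast} \in \fseti_2$ we obtain $\fseti_2 \der C^{\ast}$, hence $C^{\ast} \in \fseti_2$. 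This contradicts $C^{\ast} \in \fsetii_2$ and $\fseti_2 \cap \fsetii_2 = \emptyset$.

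The degenerate cases are handled directly: if no $B_i$ occurs, then $\fseti_0 \der \xbox C^{\ast}$ forces $C^{\ast} \in \fseti_2$ immediately via the inclusion above; if no $C_j$ occurs, the same A8 argument with $C^{\ast} = \bot$ yields $\fseti_2 \der \bot$, contradicting the $\ikma$-consistency of $v$. Apart from this application of A8 and the $\land$/$\lor$-closure reductions, the remaining work is bookkeeping with \lem~\ref{lem:lindenbaum} and the clauses of \dfn~\ref{def:canonical-model}, so I anticipate no further difficulty.
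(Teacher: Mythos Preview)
Your proof is correct and follows essentially the same approach as the paper, which defers to Ewald's argument (\cite{Ewa86}, \lem~5): the forward direction is immediate from the definitions, and the backward direction proceeds by first saturating $(\{B \mid \xbox B \in \fseti_0\},\{A\})$ to obtain $v$, then saturating $(\fseti_0 \cup \{\xdia B \mid B \in \fseti_2\},\{\xbox C \mid C \in \fsetii_2\})$ to obtain $u$, with the consistency of the latter pair hinging on axiom A8 exactly as you indicate. Your treatment of the degenerate cases and the $\land$/$\lor$-closure reductions is accurate.
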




\begin{lemma}\label{lem:containment-lemma}
The canonical model $\canmod(\albet,\axs)$ is a bi-relational $(\albet,\axs)$-model.
\end{lemma}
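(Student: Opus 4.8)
The plan is to check, one clause at a time, that $\canmod(\albet,\axs)$ meets every requirement of Definition~\ref{def:bi-relational-model} together with each frame condition of \fig~\ref{fig:axioms-related-conditions} attached to an axiom in $\axs$; throughout I write each world as $w = (\fseti_{w},\fsetii_{w})$. The routine parts come first: $\candom$ is non-empty because the logic is consistent, so $(\emptyset,\emptyset)$ is $\ikma$-consistent and extends to a saturated pair by \lem~\ref{lem:lindenbaum}; $\canirel$ is reflexive and transitive since it is inclusion on first components; and $\canv$ is monotone because $w \canirel u$ means $\fseti_{w}\subseteq\fseti_{u}$. The substance lies in the relational conditions (F1)--(F3) and the $\axs$-conditions, and the strategy is to derive (F3) from the converse axiom A7, to prove (F2) directly for every character, and then to get (F1) cheaply from (F2) and (F3).

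For (F3) I would unfold $w \canmrel u$ and show it is equivalent to $u \mathrel{R^{C}_{\conv{\charx}}} w$ using A7, namely $(A \iimp \xbox\xdiac A) \land (\xdia\xboxc A \iimp A)$, together with the deductive closure of saturated first components (\dfn~\ref{def:saturated}.2). For instance, if $w \canmrel u$ and $A \in \fseti_{w}$, then $\xbox\xdiac A \in \fseti_{w}$ by A7 and closure, so $\xdiac A \in \fseti_{u}$ by clause (i) of $\canmrel$; and if $\xboxc A \in \fseti_{u}$, then clause (ii) gives $\xdia\xboxc A \in \fseti_{w}$, whence $A \in \fseti_{w}$ by A7 and closure. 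These are exactly the two clauses defining $u \mathrel{R^{C}_{\conv{\charx}}} w$, and the reverse implication is symmetric since A7 holds for $\conv{\charx}$ as well (recalling $\conv{\conv{\charx}} = \charx$).

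The core is (F2): assuming $w \canirel w'$ and $w \canmrel v$, I build the required $v'$ by applying \lem~\ref{lem:lindenbaum} to the pair
\[
\Gamma := \fseti_{v} \cup \{A \mid \xbox A \in \fseti_{w'}\}, \qquad \Theta := \{B \mid \xdia B \notin \fseti_{w'}\}.
\]
Any saturated extension $v' = (\fseti_{v'},\fsetii_{v'})$ of $(\Gamma,\Theta)$ satisfies $v \canirel v'$ (as $\fseti_{v}\subseteq\Gamma$) and $w' \canmrel v'$: clause (i) holds since $\{A \mid \xbox A \in \fseti_{w'}\}\subseteq\Gamma$, and clause (ii) holds because any $A \in \fseti_{v'}$ with $\xdia A \notin \fseti_{w'}$ would lie in $\Theta\subseteq\fsetii_{v'}$, contradicting \dfn~\ref{def:saturated}.4. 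It remains to see that $(\Gamma,\Theta)$ is $\ikma$-consistent. Supposing otherwise yields $\der \gamma \land \alpha \iimp \beta$ where $\gamma$ is a conjunction of formulae from $\fseti_{v}$ (so $\gamma\in\fseti_{v}$, hence $\xdia\gamma\in\fseti_{w}\subseteq\fseti_{w'}$ by clause (ii) of $w\canmrel v$ and $w\canirel w'$, even when $\gamma=\top$), $\alpha$ is a conjunction with $\xbox\alpha\in\fseti_{w'}$ by A2, and $\beta$ is a disjunction of formulae $B$ with $\xdia B\notin\fseti_{w'}$. Then A5 gives $\xdia(\alpha\land\gamma)\in\fseti_{w'}$, and A4 applied to the necessitation of $\alpha\land\gamma\iimp\beta$ gives $\xdia\beta\in\fseti_{w'}$; via A3 (resp.\ A6 when $\beta=\bot$) and saturation this forces some $\xdia B\in\fseti_{w'}$ (resp.\ $\bot\in\fseti_{w'}$), a contradiction. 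With (F2) in hand for every character, (F1) is immediate: given $w\canmrel v$ and $v\canirel v'$, (F3) rewrites this as $v\mathrel{R^{C}_{\conv{\charx}}}w$ with $v\canirel v'$, to which (F2) for $R^{C}_{\conv{\charx}}$ supplies a $w'$ with $v'\mathrel{R^{C}_{\conv{\charx}}}w'$ and $w\canirel w'$, and a last use of (F3) gives $w'\canmrel v'$.

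Finally, for the $\axs$-conditions: for seriality, necessitation gives $\der\xbox\top$, so $\xbox\top\in\fseti_{w}$, the $\D$-axiom $\xbox\top\iimp\xdia\top$ yields $\xdia\top\in\fseti_{w}$, and \lem~\ref{lem:existence-lemma} produces a $u$ with $w\canmrel u$. For an $\ipa$, given a chain $w_{0}\mathrel{R^{C}_{\charx_{1}}}w_{1},\ldots,w_{n-1}\mathrel{R^{C}_{\charx_{n}}}w_{n}$, I verify $w_{0}\canmrel w_{n}$ by pushing the two defining clauses along the path: the second conjunct of $\ipa$ sends $\boxp{\charx}A\in\fseti_{w_{0}}$ to $\boxp{\charx_{1}}\cdots\boxp{\charx_{n}}A$ and then successively into $\fseti_{w_{1}},\ldots,\fseti_{w_{n}}$ to deposit $A\in\fseti_{w_{n}}$, while the first conjunct pulls $A\in\fseti_{w_{n}}$ back through $\diap{\charx_{n}},\ldots,\diap{\charx_{1}}$ to obtain $\diap{\charx_{1}}\cdots\diap{\charx_{n}}A\in\fseti_{w_{0}}$ and hence $\xdia A\in\fseti_{w_{0}}$ (the $n=0$ case reducing to reflexivity $w\canmrel w$ via the $\T$-like axiom). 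I expect the main obstacle to be the consistency computation for (F2): keeping the bookkeeping of the box/diamond interaction axioms A2--A6 correct, and in particular handling the degenerate cases (empty conjunction or empty disjunction) — the former being rescued precisely by the successor $v$ from the hypothesis $w\canmrel v$, which guarantees $\xdia\top\in\fseti_{w'}$.
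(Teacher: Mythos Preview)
Your argument is correct, and for (F3), seriality, and the $\ipa$ condition it matches the paper's proof essentially verbatim. The interesting divergence is in (F1) and (F2). The paper simply records that (F1)--(F3) are handled ``using axioms A8, A9, and A7, respectively'' and defers to Ewald; in particular A8 is invoked for (F1) and A9 for (F2). You instead prove (F2) directly from A2--A6, the key trick being to exploit the hypothesis $w\canmrel v$ to push $\xdia\gamma$ (and in the degenerate case $\xdia\top$) into $\fseti_{w'}$ via clause~(ii), after which A5 and A4 do the work that A9 would otherwise do; and you obtain (F1) for free from (F2) and the already-established (F3) by swapping $\charx$ with $\conv{\charx}$. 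This is a clean alternative: it makes no appeal to A8 or A9 in this lemma, and the reduction of (F1) to (F2) via converse is a pleasant economy that the paper does not exploit. The trade-off is that your (F2) argument has more moving parts (A2, A3, A4, A5, A6 plus the edge cases) than a single application of A9 in the Ewald style, so the paper's route is shorter per condition while yours is more self-contained and shows that A8--A9 are not essential for this particular verification.
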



\begin{proof} It is straightforward to show that $\canmod$ is a bi-relational $(\albet,\axs)$-model. The proof that $\canmod$ satisfies properties (F1)--(F3) uses axioms A8, A9, and A7, respectively (cf.~\cite{Ewa86}), and the fact that the valuation function $\canv$ is monotonic follows from its definition and the definition of $\canirel$. Below, we show that $\canmod$ satisfies each frame property associated with an axiom from $\axs$.
\begin{itemize}
\item[$\D$] We show that if the seriality axiom $\xbox A \iimp \xdia A$ is included in our axiomatization, then $R_{\charx}$ is serial. Let $w := (\fseti,\fsetii) \in \candom$ and observe that the formula $\xbox(p \iimp p) \in \fseti$ since if it were in $\fsetii$, $w$ would not be $\ikma$-consistent (and hence, not saturated). Therefore, by applying the seriality axiom $\D$, we may conclude that $\xdia (p \iimp p) \in \fseti$, from which it follows that there exists a $u := (\fsetiii,\fsetiv) \in \candom$ such that $w \canmrel u$ by \lem~\ref{lem:existence-lemma}.

\item[$\ipa$] We show that if $(\diap{\charx_{1}} \cdots \diap{\charx_{n}} A \iimp \diap{\charx} A) \land (\boxp{\charx} A \iimp \boxp{\charx_{1}} \cdots \boxp{\charx_{n}} A)$ is included in our axiomatization, then for any $w_{0}, \ldots, w_{n} \in \candom$, if $w_{i} R_{\charx_{i+1}}^{C} w_{i+1}$ for each $i \in \{0, \ldots, n-1\}$, then $w_{0} R_{\charx}^{C} w_{n}$. Let $w_{0}, \ldots, w_{n}$ be arbitrary worlds in $\candom$ and suppose that $w_{i} R_{\charx_{i+1}}^{C} w_{i+1}$ for each $i \in \{0, \ldots, n-1\}$. We aim to show that $w_{0} R_{\charx}^{C} w_{n}$, where $w_{0} := (\fseti_{0},\fsetii_{0})$ and $w_{n} := (\fseti_{n}, \fsetii_{n})$. First, assume that $\xbox A \in \fseti_{0}$. Then, by the above axiom, $\boxp{\charx_{1}} \cdots \boxp{\charx_{n}} A \in \fseti_{0}$, and by our assumption and the definition of the $\canmrel$ relation, $A \in \fseti_{n}$. Second, assume that $A \in \fseti_{n}$. Then, by our assumption and the definition of the $\canmrel$ relation, $\diap{\charx_{1}} \cdots \diap{\charx_{n}} A \in \fseti_{0}$, so by the above axiom, $\xdia A \in \fseti_{0}$. Therefore, $w_{0} R_{\charx}^{C} w_{n}$.
\end{itemize}
\qed
\end{proof}

\begin{lemma}[Truth Lemma]\label{lem:truth-lemma}
Let $w := (\fseti,\fsetii)$ be saturated. Then, we have $\canmod(\albet,\axs), w \sat A$ \ifandonlyif $A \in \fseti$.
\end{lemma}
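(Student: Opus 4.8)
The plan is to proceed by induction on the complexity of $A$, exploiting the saturation properties of $w := (\fseti,\fsetii)$ together with the two modal bridge lemmas (\lem~\ref{lem:existence-lemma} and \lem~\ref{lem:necessity-lemma}) already available. For the base cases, when $A = p$ the biconditional is immediate from the definition of $\canv$, and when $A = \bot$ both sides fail: $\canmod, w \not\sat \bot$ by the semantic clause, while $\bot \notin \fseti$ follows from the $\ikma$-consistency of $(\fseti,\fsetii)$. The conjunction and disjunction cases are routine: both directions reduce to the deductive closure of $\fseti$ (saturation property 2), supplemented by the disjunction property (saturation property 3) in the $\lor$ case, after applying the induction hypothesis to the immediate subformulae.

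The modal cases are largely delegated to the bridge lemmas. For $A = \xdia B$, I would observe that \lem~\ref{lem:existence-lemma} states precisely that $\xdia B \in \fseti$ \ifandonlyif some $\canmrel$-successor $u := (\fseti',\fsetii')$ of $w$ has $B \in \fseti'$; composing this with the induction hypothesis on $B$ and the semantic clause for $\xdia$ yields the desired equivalence. Symmetrically, for $A = \xbox B$, \lem~\ref{lem:necessity-lemma} characterises $\xbox B \in \fseti$ by the condition that $B$ lies in every world $\canmrel$-accessible from a $\canirel$-successor of $w$, which matches the semantic clause for $\xbox$ once the induction hypothesis is applied to $B$.

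The hard part will be the implication case $A = B \iimp C$, where the intuitionistic semantics quantifies over $\canirel$-successors. The direction from membership to satisfaction is direct: if $B \iimp C \in \fseti$ and $w \canirel w'$ with $\canmod, w' \sat B$, then $B \in \fseti'$ by the induction hypothesis, $B \iimp C \in \fseti'$ since $\fseti \subseteq \fseti'$, hence $C \in \fseti'$ by modus ponens and deductive closure, and finally $\canmod, w' \sat C$ by the induction hypothesis. The reverse direction requires producing a countermodel world, and this is where the real work lies: assuming $B \iimp C \notin \fseti$, I would form the pair $(\fseti \cup \{B\}, \{C\})$ and argue it is $\ikma$-consistent---otherwise the deduction theorem for intuitionistic logic would yield $\fseti \der B \iimp C$ and hence, by deductive closure (saturation property 2), $B \iimp C \in \fseti$, contradicting our assumption. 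Applying \lem~\ref{lem:lindenbaum} extends this pair to a saturated $w' := (\fseti',\fsetii')$ with $\fseti \subseteq \fseti'$, $B \in \fseti'$, and $C \in \fsetii'$; since $\fseti' \cap \fsetii' = \emptyset$ (saturation property 4) we get $C \notin \fseti'$, so the induction hypothesis yields $w \canirel w'$, $\canmod, w' \sat B$, and $\canmod, w' \not\sat C$, witnessing $\canmod, w \not\sat B \iimp C$. The main obstacle is thus setting up this consistency argument correctly and invoking the deduction theorem in the intuitionistic setting; the remaining cases follow the familiar pattern.
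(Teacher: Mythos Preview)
Your proposal is correct and follows essentially the same route as the paper: induction on the complexity of $A$, with the $\iimp$ case handled by forming the pair $(\fseti \cup \{B\}, \{C\})$, arguing consistency via the deduction theorem, extending it via \lem~\ref{lem:lindenbaum}, and invoking \lem~\ref{lem:existence-lemma} and \lem~\ref{lem:necessity-lemma} for the modal cases. The paper's proof is terser---it labels the forward direction of $\iimp$ and one direction of $\xdia$ as ``straightforward'' without spelling them out---but your more explicit treatment is entirely in line with its argument.
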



\begin{proof} We prove the result by induction on the complexity of $A$ and argue the $\lor$, $\iimp$, $\xdia$, and $\xbox$ cases since the other cases are simple.

$B \lor C$. $\canmod(\albet,\axs), w \sat B \lor C$ \ifandonlyif $\canmod(\albet,\axs), w \sat B$ or $\canmod(\albet,\axs), w \sat C$ \ifandonlyif $B \in \fseti$ or $C \in \fseti$ \ifandonlyif $B \lor C \in \fseti$. We note that the second `\textit{iff}' follows from IH an the third follows from the fact that $w$ is saturated (see \dfn~\ref{def:saturated}).

$B \iimp C$. The right-to-left direction is straightforward, so we show the left-to-right direction by contraposition. 
 Suppose that $B \iimp C \not\in \fseti$. It follows that $\fseti \cup \{B\} \not\der C$, implying that the pair $(\fseti \cup \{B\}; \{C\})$ is $\ikma$-consistent, and so, we may extend it to a saturated pair $u := (\fsetiii,\fsetiv)$. Observe that $\fseti \subseteq \fsetiii$, $B \in \fsetiii$, and $C \not\in \fsetiii$. By the definition of $\canirel$ and IH, it follows that $u \in \candom$ with $w \canirel u$, $\canmod(\albet,\axs), u \sat B$, and $\canmod(\albet,\axs), u \not\sat C$, entailing that $\canmod(\albet,\axs), w \not\sat B \iimp C$.

$\xdia B$. The left-to-right direction is straightforward, so we show the right-to-left direction. Suppose that $\xdia B \in \fseti$. Then, by \lem~\ref{lem:existence-lemma} we know that there exists a $u := (\fsetiii,\fsetiv) \in \candom$ such that $w \canmrel u$ and $B \in \fsetiii$. Therefore, $\canmod(\albet,\axs), u \sat B$ by IH, implying that $\canmod(\albet,\axs), w \sat \xdia B$.

$\xbox B$. Follows from \lem~\ref{lem:necessity-lemma} and IH.
\qed
\end{proof}

\begin{theorem}[Completeness]\label{thm:complete-ikma}
If $\fseti \ent A$, then $\fseti \der A$.
\end{theorem}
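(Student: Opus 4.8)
The plan is to establish the contrapositive, namely that if $\fseti \not\der A$, then $\fseti \not\ent A$, by exhibiting a world in the canonical model $\canmod(\albet,\axs)$ that satisfies every formula in $\fseti$ yet falsifies $A$. Since \lem~\ref{lem:containment-lemma} guarantees that $\canmod(\albet,\axs)$ is a genuine bi-relational $(\albet,\axs)$-model, such a world immediately witnesses $\fseti \not\ent A$.

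So assume $\fseti \not\der A$. The key preliminary step is to verify that the pair $(\fseti, \{A\})$ is $\ikma$-consistent in the sense of \dfn~\ref{def:consistent}. Unfolding the definition, I must rule out $\der \bigwedge \fseti_{0} \iimp \bigvee \fsetii_{0}$ for all finite $\fseti_{0} \subseteq \fseti$ and $\fsetii_{0} \subseteq \{A\}$. There are only two cases for $\fsetii_{0}$. If $\fsetii_{0} = \{A\}$, then $\bigvee \fsetii_{0} = A$, and $\der \bigwedge \fseti_{0} \iimp A$ would give $\fseti \der A$ directly, contradicting the assumption. If $\fsetii_{0} = \emptyset$, then $\bigvee \fsetii_{0} = \bot$, and $\der \bigwedge \fseti_{0} \iimp \bot$ together with the intuitionistic theorem $\bot \iimp A$ would again yield $\der \bigwedge \fseti_{0} \iimp A$, hence $\fseti \der A$, another contradiction. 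Thus $(\fseti, \{A\})$ is $\ikma$-consistent.

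Now I apply \lem~\ref{lem:lindenbaum} to extend $(\fseti, \{A\})$ to a saturated pair $(\fseti^{\omega}, \fsetii^{\omega})$ with $\fseti \subseteq \fseti^{\omega}$ and $A \in \fsetii^{\omega}$, and set $w := (\fseti^{\omega}, \fsetii^{\omega}) \in \candom$. By the Truth Lemma (\lem~\ref{lem:truth-lemma}), $\canmod(\albet,\axs), w \sat B$ \ifandonlyif $B \in \fseti^{\omega}$ for every formula $B$. Since $\fseti \subseteq \fseti^{\omega}$, every $B \in \fseti$ is satisfied at $w$; and since saturation gives $\fseti^{\omega} \cap \fsetii^{\omega} = \emptyset$ (property 4 of \dfn~\ref{def:saturated}), the membership $A \in \fsetii^{\omega}$ forces $A \not\in \fseti^{\omega}$, so $\canmod(\albet,\axs), w \not\sat A$. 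Hence $w$ is the desired countermodel world and $\fseti \not\ent A$, establishing the contrapositive.

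The substantive work has already been carried out in \lem~\ref{lem:lindenbaum}, \lem~\ref{lem:truth-lemma}, and \lem~\ref{lem:containment-lemma}, so the proof is essentially an assembly of these components. The one point demanding a little care---and the only genuine obstacle---is the consistency check above: one must not overlook the empty-disjunction case $\fsetii_{0} = \emptyset$, which is precisely where the hypothesis $\fseti \not\der A$ is used to preclude the outright inconsistency of $\fseti$ itself.
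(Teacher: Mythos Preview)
Your proof is correct and follows essentially the same route as the paper: assume $\fseti \not\der A$, note that $(\fseti,\{A\})$ is $\ikma$-consistent, extend it to a saturated pair via \lem~\ref{lem:lindenbaum}, and invoke the Truth Lemma (together with \lem~\ref{lem:containment-lemma}) to obtain the countermodel. You simply spell out in more detail the consistency check and the use of saturation property~4, which the paper leaves implicit.
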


\begin{proof} Suppose $\fseti \not\der A$. Then, $(\fseti,\{A\})$ is $\ikma$-consistent and can be extended to a saturated pair $w := (\fseti^{\omega},\fsetii^{\omega})$. By \lem~\ref{lem:truth-lemma}, $\canmod, w \sat B$ for each $B \in \fseti^{\omega}$, but  $\canmod, w \not\sat C$ for each $C \in \fsetii^{\omega}$. Hence, $\fseti \not\ent A$.
\qed
\end{proof}



\section{Conclusion}\label{sec:conclusion}

This paper provided sound and complete axiomatizations for intuitionistic grammar logics. We defined a base intuitionistic grammar logic $\ikm(\albet)$, for each alphabet $\albet$, and provided axiomatizations for extensions of $\ikm(\albet)$ with combinations of seriality axioms and intuitionistic path axioms. In future work, we aim to provide nested sequent systems in the style of~\cite{Str13} for the logics discussed here by making use of the structural refinement methodology of~\cite{Lyo21thesis}. The goal will be to identify decidable fragments of intuitionistic grammar logics via proof-search. Moreover, due to the connection between modal logics and description logics, it could be worthwhile to investigate the use of intuitionistic grammar logics (or close variants thereof) in knowledge representation.

\bibliographystyle{splncs04}
\bibliography{bibliography}

\end{document}